\documentclass[letterpaper, 10 pt, conference]{ieeeconf}  

\IEEEoverridecommandlockouts                              
\overrideIEEEmargins

\usepackage{graphics} 
\usepackage{epsfig} 
\usepackage{times} 
\usepackage{amsmath} 
\usepackage{amssymb}  

\usepackage{multirow}
\usepackage{array}
\usepackage{amsfonts}

\usepackage{amsthm}

\usepackage{todonotes}
\usepackage{comment}
\usepackage{cite}

\usepackage{booktabs}

\theoremstyle{definition}
\newtheorem{assumption}{Assumption}
\newtheorem{definition}{Definition}
\theoremstyle{plain}

\newtheorem{corollary}{Corollary}
\newtheorem{lemma}{Lemma}

\newtheorem{remark}{Remark}

\newtheorem{problem}{Problem}

\title{\LARGE \bf
Optimal Path Planning for Connected and Automated Vehicles at Urban Intersections
}

\author{Andreas A. Malikopoulos, {\itshape{Senior Member, IEEE}}, and Liuhui Zhao, {\itshape{Member, IEEE}}
\thanks{This research was supported in part by ARPAE's NEXTCAR program under the award number DE-AR0000796 and by the  Delaware Energy Institute (DEI).}%
\thanks{The authors are with the Department of Mechanical Engineering, University of Delaware, Newark, DE 19716 USA (email: \tt\small{andreas@udel.edu}; \tt\small{lhzhao@udel.edu.}} }

\begin{document}

\maketitle
\thispagestyle{empty}
\pagestyle{empty}

\begin{abstract}
In earlier work, a decentralized optimal control framework was established for coordinating online connected and automated vehicles (CAVs) at urban intersections. The policy designating the sequence that each CAV crosses the intersection, however, was based on a first-in-first-out queue, imposing limitations on the optimal solution. Moreover, no lane changing, or left and right turns were considered. In this paper, we formulate an upper-level optimization problem, the solution of which yields, for each CAV, the optimal sequence and lane to cross the intersection. The effectiveness of the proposed approach is illustrated through simulation. 

\end{abstract}

\indent




\section{Introduction} \label{sec:1}
We are currently witnessing an increasing integration of our energy, transportation, and cyber networks, which, coupled with the human interactions, is giving rise to a new level of complexity in the transportation network. As we move to increasingly complex emerging mobility systems, new control approaches are needed to optimize the impact on system behavior of the interplay between vehicles at different transportation scenarios, e.g., intersections, merging roadways, roundabouts, speed reduction zones. These scenarios along with the driver responses to various disturbances \cite{Malikopoulos2013} are the primary sources of bottlenecks that contribute to traffic congestion \cite{Margiotta2011}. 

An automated transportation system \cite{Zhao2019} can alleviate congestion, reduce energy use and emissions, and improve safety by increasing significantly traffic flow as a result of closer packing of automatically controlled vehicles in platoons. One of the very early efforts in this direction was proposed in 1969 by Athans \cite{Athans1969} for safe and efficient coordination of merging maneuvers with the intention to avoid congestion.  Varaiya \cite{Varaiya1993} has discussed extensively the key features of an automated intelligent vehicle-highway system and proposed a related control system architecture. 


Connected and automated vehicles (CAVs) provide the most intriguing opportunity for enabling decision makers to better monitor transportation network conditions and make better operating decisions to improve safety and reduce pollution, energy consumption, and travel delays.  Several research efforts have been reported in the literature on coordinating CAVs at at different transportation scenarios, e.g., intersections, merging roadways, roundabouts, speed reduction zones. In 2004, Dresner and Stone \cite{Dresner2004} proposed the use of the reservation scheme to control a single intersection of two roads with vehicles traveling with similar speed on a single direction on each road. 
Since then, several  approaches have been proposed \cite{Dresner2008,DeLaFortelle2010} to maximize the throughput of signalized-free intersections including extensions of the reservation scheme in \cite{Dresner2004}. 
Some approaches have focused on coordinating vehicles at intersections to improve  travel time \cite{Yan2009}. Other approaches have considered minimizing the overlap in the position of vehicles inside the intersection, rather than arrival time \cite{Lee2012}. Kim and Kumar \cite{Kim2014} proposed an approach based on model predictive control that allows each vehicle to optimize its movement locally in a distributed manner with respect to any objective of interest.  
A detailed discussion of the research efforts in this area that have been reported in the literature to date can be found in \cite{Malikopoulos2016a}. 

In earlier work, a decentralized optimal control framework was established for coordinating online CAVs in different transportation scenarios, e.g., merging roadways, urban intersections, speed reduction zones, and roundabouts. The analytical solution without considering state and control constraints was presented in \cite{Rios-Torres2015}, \cite{Rios-Torres2}, and \cite{Ntousakis:2016aa} for coordinating online CAVs at highway on-ramps, in \cite{Zhang2016a} at two adjacent intersections, and in \cite{Malikopoulos2018a} at roundabouts. 
The solution of the unconstrained problem was also validated experimentally at the University of Delaware's Scaled Smart City using 10 CAV robotic cars \cite{Malikopoulos2018b} in a merging roadway scenario. The solution of the optimal control problem considering state and control constraints was presented in \cite{Malikopoulos2017} at an urban intersection. 

However, the policy designating the sequence that each CAV crosses the intersection in the aforementioned approaches, was based on a first-in-first-out queue, imposing limitations on the optimal solution. Moreover, no lane changing, or left and right turns were considered. 
In this paper, we formulate an upper-level optimization problem, the solution of which yields, for each CAV, the optimal sequence and lane to cross the intersection. The effectiveness of the solution is illustrated through simulation. 


The structure of the paper is organized as follows. In Section II, we formulate the problem of vehicle coordination at an urban intersection and provide the modeling framework. In Section III, we briefly present the analytical, closed form solution for the low-level optimization problem. In Section IV, we present the upper-level optimization problem the solution of yields, for each CAV, the optimal sequence and lane to cross the intersection. Finally in Section V, we validate the effectiveness of the solution through simulation. We offer concluding remarks in Section VI.

\section{Problem Formulation} \label{sec:2}

\subsection{Modeling Framework} \label{sec:2a}

We consider CAVs at a 100\% penetration rate crossing a signalized-free intersection (Fig. \ref{fig:1}). The region at the center of the intersection, called \textit{merging zone}, is the area of potential lateral collision of the vehicles. The intersection has a \textit{control zone} inside of which the CAVs can communicate with each other and with the intersection's \textit{crossing protocol}. The  \textit{crossing protocol}, defined formally in the next subsection, stores the vehicles' path trajectories from the time they enter until the time they exit the control zone. The distance from the entry of  the control zone until the entry of the merging zone is $S_c$ and, although it is not restrictive, we consider to be the same for all entry points of the control zone. We also consider the merging zone to be a square of side $S_m$ (Fig. \ref{fig:1}). Note that the length $S_c$ could be in the order of hundreds of $m$ depending on the crossing protocol's communication range capability, while $S_m$ is the length of a typical intersection. The CAVs crossing the intersection can also make a right turn of radius $R_r$, or a left turn of radius $R_l$ (Fig. \ref{fig:1}). The intersection's geometry is not restrictive in our modeling framework, and is used only to determine the total distance travelled by each CAV inside the control zone. 

\begin{figure}
	\centering
	\includegraphics[width=3.4 in]{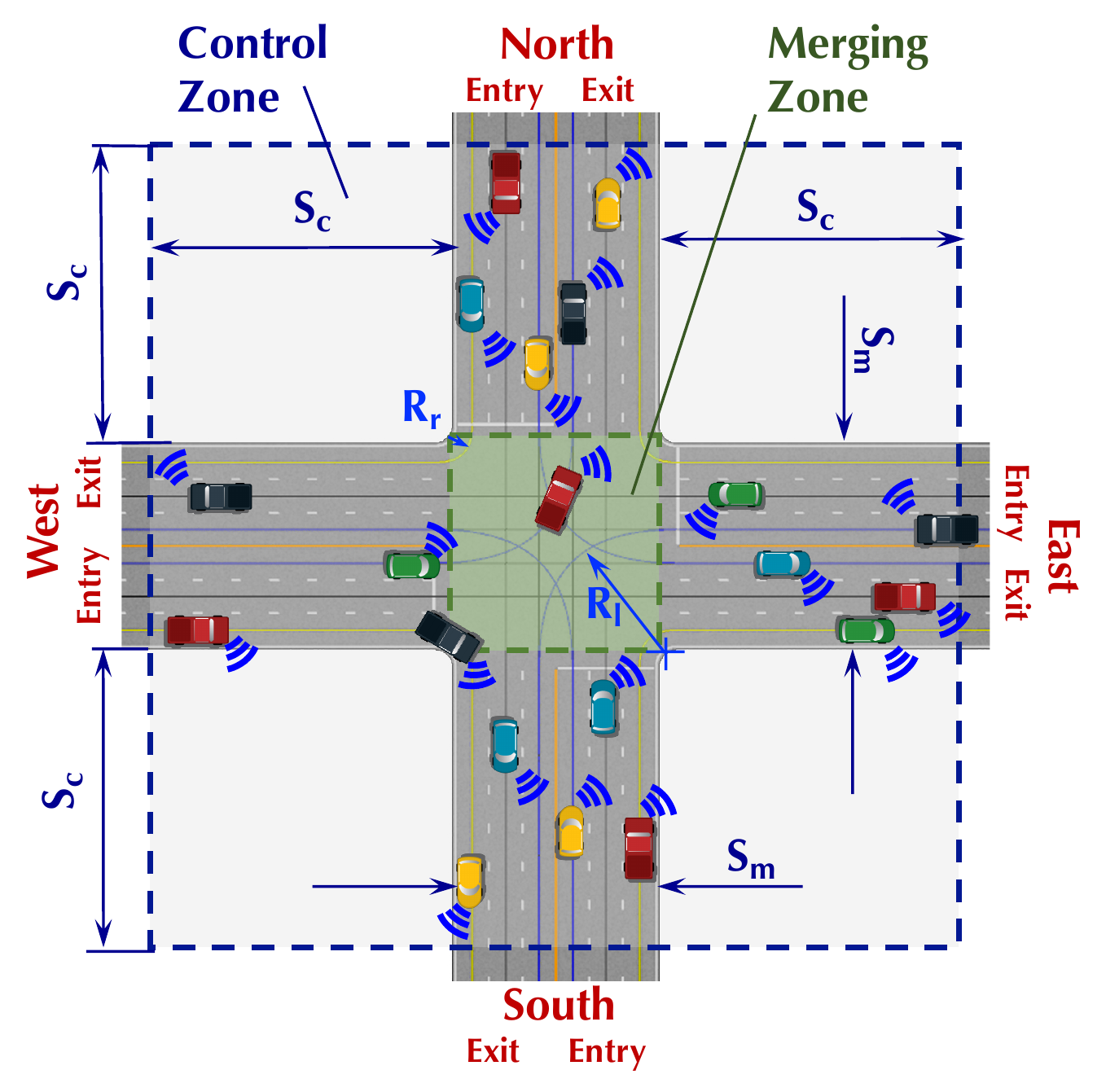} 
	\caption{A signalized-free intersection.}%
	\label{fig:1}%
\end{figure}

Let $\mathcal{N}(t)=\{1,\ldots,N(t)\}$, $N(t)\in\mathbb{N}$, be the set of CAVs inside the control zone at time $t\in\mathbb{R}^{+}$. Let $t_{i}^{f}$ be the assigned time for vehicle $i$ to exit the control zone.
There is a number of ways to assign $t_{i}^{f}$ for each vehicle $i$. For example, we may
impose a strict first-in-first-out queuing structure \cite{Malikopoulos2017}, where each CAV must
exit the control zone in the same order it entered the control zone. The policy, which  determines the time $t_{i}^{f}$ that each vehicle $i$ exits the control zone,  is the
result of an upper-level optimization problem and can aim at maximizing the throughput of the intersection. On the other hand, deriving the optimal control input (minimum acceleration/deceleration) for each vehicle $i$ from the time $t_{i}^{0}$ it enters the control zone to achieve the target $t_{i}^{f}$ can aim at minimizing its energy \cite{Malikopoulos2010a}.

In what follows, we present a two-level, joint optimization framework: (1) an upper level optimization that yields for each CAV $i\in\mathcal{N}(t)$ with a given origin (entry of the control zone) and desired destination (exit of the control zone)  the sequence that will be exiting the control zone, namely, (a) minimum time $t_{i}^{f}$ to exit the control zone and (b) optimal path including the lanes that each CAV should be occupying while traveling inside the control zone; and (2) a low-level optimization that yields, for CAV $i\in\mathcal{N}(t),$ its optimal control input (acceleration/deceleration) to achieve the optimal path and $t_{i}^{f}$ derived in (1) subject to the state, control, and safety constraints.

The two-level optimization framework is used by each CAV $i\in\mathcal{N}(t)$ as follows. When vehicle $i$ enters the control zone at $t_{i}^{0}$, it accesses the intersection's \textit{crossing protocol} that includes the path trajectories, defined formally in the next subsection, of all CAVs inside the control zone. Then, vehicle $i$ solves the upper-level optimization problem and derives the minimum time $t_{i}^{f}$ to exit the control zone along with its optimal path including the appropriate lanes that it should occupy. The outcome of the upper-level optimization problem becomes the input of the low-level optimization problem. In particular, once the CAV derives the minimum time $t_{i}^{f}$, it derives its minimum acceleration/deceleration profile, in terms of energy, to achieve the exit time $t_{i}^{f}$. 

The implications of the proposed optimization framework are that CAVs do not have to come to a full stop at the intersection, thereby conserving momentum and energy while also improving travel time. Moreover, by optimizing each vehicle's acceleration/deceleration, we minimize transient engine operation  \cite{Malikopoulos2008b}, and thus we have additional benefits in fuel consumption.

\subsection{Vehicle Model, Constraints, and Assumptions} \label{sec:2b}

In our analysis, we consider that 
each CAV $i\in\mathcal{N}(t)$ is governed by the following  dynamics
\begin{equation}%
\begin{split}
\dot{p}_{i} &  =v_{i}(t)\\
\dot{v}_{i} &  =u_{i}(t)\\
\dot{s}_{i} &  = \xi_i \cdot (v_{k}(t)-v_{i}(t))
\label{eq:model2}
\end{split}
\end{equation}
where $p_{i}(t)\in\mathcal{P}_{i}$, $v_{i}(t)\in\mathcal{V}_{i}$, and
$u_{i}(t)\in\mathcal{U}_{i}$ denote the position, speed and
acceleration/deceleration (control input) of each vehicle $i$ inside the control zone at time $t\in[t_{i}^{0}, t_{i}^{f}]$, where $t_i^0$ and $t_i^f$ are the times that vehicle $i$ enters and exits the control zone respectively; ~$s_{i}(t)\in\mathcal{S}_{i}$, with $s_{i}(t)=p_{k}(t)-p_{i}(t),$ denotes the distance of vehicle $i$ from the CAV $k\in\mathcal{N}(t)$ which is physically immediately ahead of $i$ in the same lane, and $\xi_{i}$ is a reaction constant of vehicle $i$. The sets $\mathcal{P}_{i}$,$\mathcal{V}_{i}$, $\mathcal{U}_{i}$, and $\mathcal{S}_{i}$, $i\in\mathcal{N}(t),$ are complete and totally bounded subsets of $\mathbb{R}$.

Let $x_{i}(t)=\left[p_{i}(t) ~ v_{i}(t) ~ s_{i}(t)\right]  ^{T}$ denote the state of each vehicle $i$ taking values in $\mathcal{X}_{i}%
=\mathcal{P}_{i}\times\mathcal{V}_{i}\times\mathcal{S}_{i}$, with initial value
$x_{i}(t_{i}^{0})=x_{i}^{0}=\left[p_{i}^{0} ~ v_{i}^{0} ~s_{i}^{0}\right]  ^{T},$ where $p_{i}^{0}= p_{i}(t_{i}^{0})=0$, $v_{i}^{0}= v_{i}(t_{i}^{0})$, and $s_{i}^{0}= s_{i}(t_{i}^{0})$ at the entry of the control zone.  The state space 
$\mathcal{X}_{i}$ for each vehicle $i$ is
closed with respect to the induced topology on $\mathcal{P}_{i}\times
\mathcal{V}_{i}\times\mathcal{S}_{i}$ and thus, it is compact.
We need to ensure that for any initial state $(t_i^0, x_i^0)$ and every admissible control $u(t)$, the system \eqref{eq:model2} has a unique solution $x(t)$ on some interval $[t_i^0, t_i^f]$. 
The following observations from \eqref{eq:model2} satisfy some regularity conditions required both on the state equations and admissible controls $u(t)$ to guarantee local existence and uniqueness of solutions for \eqref{eq:model2}: a) the state equations are continuous in $u$ and continuously differentiable in the state $x$, b) the first derivative of the state equations in $x$, is continuous in $u$, and c) the admissible control $u(t)$ is continuous with respect to $t$.

To ensure that the control input and vehicle speed are within a
given admissible range, the following constraints are imposed.
\begin{gather}%
u_{i,min}  \leq u_{i}(t)\leq u_{i,max}, \label{speed_accel constraints}  \quad\text{and}\\
0 < v_{min}\leq v_{i}(t)\leq v_{max},\label{speed}\quad\forall t\in\lbrack t_{i}%
^{0},t_{i}^{f}],
\end{gather}
where $u_{i,min}$, $u_{i,max}$ are the minimum deceleration and maximum
acceleration for each vehicle $i\in\mathcal{N}(t)$, and $v_{min}$, $v_{max}$ are the minimum and maximum speed limits respectively. 

To ensure the absence of rear-end collision of two consecutive vehicles traveling on the same lane,  the position of the preceding vehicle should be greater than or equal to the position of the following vehicle plus a predefined safe distance $\delta_i(t)$. Thus we impose the rear-end safety constraint 
\begin{equation}
\begin{split}
s_{i}(t)=\xi_i \cdot (p_{k}(t)-p_{i}(t)) \ge \delta_i(t),~ \forall t\in [t_i^0, t_i^f].
\label{eq:rearend}
\end{split}
\end{equation}
We consider  constant time headway instead of constant distance that each vehicle should keep when following the other vehicles, thus, the minimum safe distance $\delta_i(t)$ is expressed as a function of speed $v_i(t)$ and minimum time headway between vehicle $i$ and its preceding vehicle $k$, denoted as $\rho_i$.
\begin{equation}
\begin{split}
\delta_i(t)=\gamma_i + \rho_i \cdot v_i(t),~ \forall t\in [t_i^0, t_i^f],
\label{eq:safedist}
\end{split}
\end{equation}
where $\gamma_i$ is the standstill distance (i.e., the distance between two vehicles when they both stop).

A lateral collision can occur if a vehicle $j\in\mathcal{N}(t)$ cruising on a different road from $i$ inside the merging zone. In this case, the lateral safety constraint between $i$ and $j$ is 
\begin{equation}
\begin{split}
s_{i}(t)=\xi_i \cdot (p_{j,i}(t)-p_{i}(t)) \ge \delta_i(t),~ \forall t\in [t_i^0, t_i^f],
\label{eq:lateral}
\end{split}
\end{equation}
where $p_{j,i}(t)$ is the distance of vehicle $j$ from the entry point that vehicle $i$ entered the control zone.

\begin{definition}	\label{def:lanes}
	The set of all lanes at the roads of the intersection is denoted by $\mathcal{L}:=\{1,\dots,M\}, M\in\mathbb{N}.$	
\end{definition}

\begin{definition} \label{def:lanesfunction}
	For each vehicle $i\in\mathcal{N}(t)$, the function $l_i(t): [t_i^0, t_i^f]\to \mathcal{L}$ yields the lane the vehicle $i$ occupies inside the control zone at time $t$.
\end{definition}

\begin{definition} \label{def:cardinal}
	For each vehicle $i\in\mathcal{N}(t)$, the pair of the cardinal point that the vehicle enters the control zone and the cardinal point that the vehicle exits the control zone is denoted by $o_i$.
	
\end{definition}

For example, based on Definition \ref{def:cardinal}, for a vehicle $i$ that enters the control zone from the West entry (Fig. \ref{fig:1}) and exits the control zone from the South exit, $o_i=(W,S)$.

\begin{definition}\label{def:path}
	For each vehicle $i\in\mathcal{N}(t)$, the function $t_{p_i,l_i}\big(p_i(t),l_i(t)\big): \mathcal{P}_i\times \mathcal{L}\to[t_i^0, t_i^f],$ is called the \textit{path trajectory} of vehicle $i$, and it yields the time when vehicle $i$ is at the position $p_i(t)$ inside the control zone and occupies lane $l_i(t)$.
	
\end{definition}

\begin{definition}\label{def:protocol}
	The intersection's \textit{crossing protocol} is denoted by $\Pi(t)$ and includes the following information
	\begin{gather}\label{eq:protocol}
		\Pi(t):=\{t_{p_i,l_i}\big(p_i(t),l_i(t)\big), l_i(t), o_i, t_i^0, t_i^f\},  \\ \nonumber
		\forall i\in\mathcal{N}(t), t\in\mathbb{R}^+.
	\end{gather}
\end{definition}


\begin{remark} \label{ass:feas}
	 The vehicles traveling inside the control zone can change lanes either (1) in the lateral direction (e.g., move to a neighbor lane), or (2) when making a right (or a left) turn inside the merging zone. In the former case, when the vehicle changes lane it travels along the hypotenuse $dy$ of the triangle created by the width of the lane and the longitudinal displacement $dp$ if it had not changed lane. Thus, in this case, the vehicle travels an additional distance which is equal to the difference between the hypotenuse $dy$ and the longitudinal displacement $dp$, i.e., $dy-dp$.
\end{remark}


\begin{remark} \label{ass:feas}
	 When a vehicle is about to make a right turn it must occupy the right lane of the road before it enters the merging zone. Similarly, when a vehicle is about to make a left turn it must occupy the left lane before it enters the merging zone.
\end{remark}

In the modeling framework presented above, we impose the following assumptions:


\begin{assumption} \label{ass:lane} 
	The vehicle's additional distance $dy-dp$ traveled when it changes lanes in the lateral direction can be neglected. 
\end{assumption}


\begin{assumption} \label{ass:noise}
	 Each CAV $i\in\mathcal{N}(t)$ has proximity sensors and can communicate with other CAVs and the \textit{crossing protocol} without any errors or delays.
\end{assumption}

The first assumption can be justified since we consider an intersection and the speed limit inside the control zone is relatively low, hence $dy\approx dp$. The second assumption may be strong, but it is relatively straightforward to relax it as long as the noise in the communication, measurements and delays are bounded. In this case, we can determine upper bounds on the state uncertainties as a result of sensing or communication errors and delays, and incorporate these into more conservative safety constraints.

When each vehicle $i$ with a given $o_i$ enters the control zone, it accesses the intersection's \textit{crossing protocol} and solves two optimization problems: (1) an upper-level optimization problem, the solution of which yields its path trajectory $t_{p_i,l_i}\big(p_i(t),l_i(t)\big)$ and the minimum time $t_{i}^{f}$ to exit the control zone; and (2) a low-level optimization problem, the solution of which yields its optimal control input (acceleration/deceleration) to achieve the optimal path and $t_{i}^{f}$ derived in (1) subject to the state, control, and safety constraints.

We start our exposition with the low-level optimization problem, and then we discuss the upper-level problem.

\section{Low-level optimization} \label{sec:3}

In this section, we consider that the solution of the upper-level optimization problem is given, and thus, the minimum time $t_{i}^{f}$ for each vehicle $i\in\mathcal{N}(t)$ is known, and we focus on a low-level optimization problem that yields for each vehicle $i$ the optimal control input (acceleration/deceleration) to achieve the assigned $t_{i}^{f}$ subject to the state, control, and safety constraints.

\begin{problem} \label{problem1}
Once $t_{i}^{f}$ is determined, the low-level problem for each vehicle $i\in\mathcal{N}(t)$ is to minimize the cost functional $J_{i}(u(t))$, which is the $L^2$-norm of the control input in $[t_i^0, t_i^f]$
\begin{gather}\label{eq:decentral}
\min_{u(t)\in U_i} J_{i}(u(t))=  \frac{1}{2} \int_{t^0_i}^{t^f_i} u^2_i(t)~dt,\\ 
\text{subject to}%
:\eqref{eq:model2},\eqref{speed_accel constraints},\eqref{speed}, \eqref{eq:rearend},\nonumber\\
\text{and given }t_{i}^{0}\text{, }v_{i}^{0}\text{, }t_{i}^{f}\text{,
}p_{i}(t_{i}^{0})\text{, }p_{i}(t_{i}^{f}),\nonumber
\end{gather}
where $p_{i}(t_{i}^{0})=0$, while  the value of $p_{i}(t_{i}^{f})$ for each $i\in\mathcal{N}(t)$ depends on $o_i$ and, based on Assumption \ref{ass:lane}, can take the following values (Fig. \ref{fig:1}): (1) $p_{i}(t_{i}^{f})=2 S_c + S_m$, if the CAV crosses the merging zone, (2) $p_{i}(t_{i}^{f})=2 S_c + \frac{\pi R_r}{2}$, if the CAV makes a right turn at the merging zone, and (3) $p_{i}(t_{i}^{f})=2 S_c + \frac{\pi R_l}{2}$, if the CAV makes a left turn at the merging zone.
\end{problem}
For the analytical solution of \eqref{eq:decentral}, we formulate the Hamiltonian 
\begin{gather}
H_{i}\big(t, p_{i}(t), v_{i}(t), s_{i}(t), u_{i}(t)\big)  \nonumber \\
=\frac{1}{2} u_i(t)^{2}_{i} + \lambda^{p}_{i} \cdot v_{i}(t) + \lambda^{v}_{i} \cdot u_{i}(t) +\lambda^{s}_{i} \cdot \xi_i \cdot (v_{k}(t) - v_{i}(t)) \nonumber\\
+ \mu^{a}_{i} \cdot(u_{i}(t) - u_{max})
+ \mu^{b}_{i} \cdot(u_{min} - u_{i}(t)) \nonumber\\
+ \mu^{c}_{i} \cdot  u_{i}(t) - \mu^{d}_{i} \cdot u_{i}(t) \nonumber\\ 
+ \mu^{s}_{i} \cdot (\rho_i \cdot u_i(t) - \xi_i\big(v_{k}(t) - v_i(t)\big)) ,\label{eq:16b}
\end{gather}
where $\lambda^{p}_{i}$, $\lambda^{v}_{i}$, and $\lambda^{s}_{i}$ are the influence functions \cite{Bryson:1963}, and
$\mu^{T}$ is the vector of the Lagrange multipliers. To address this problem, the constrained and unconstrained arcs will be pieced together to satisfy the Euler-Lagrange equations and necessary condition of optimality. 

For the case that none of the state and control constraints become active, the optimal control is \cite{Malikopoulos2019ACC}
\begin{equation}
u^{*}_{i}(t) = (a_{i} - b_{i} \cdot \xi_i) \cdot t + c_{i}, ~ t \in[t^{0}_{i}, t_i^f]. \label{eq:20}
\end{equation}
Substituting the last equation into \eqref{eq:model2} we find the optimal speed and position for each vehicle,
namely
\begin{gather}
v^{*}_{i}(t) = \frac{1}{2} (a_{i} - b_{i} \cdot \xi_i) \cdot t^2 + c_{i} \cdot t +d_{i}, ~ t \in[t^{0}_{i}, t_i^f], \label{eq:21}\\
p^{*}_{i}(t) = \frac{1}{6} (a_{i} - b_{i} \cdot \xi_i) \cdot t^3 +\frac{1}{2} c_{i} \cdot t^2 + d_{i}\cdot t +e_{i}, \label{eq:22} \\~ t \in[t^{0}_{i}, t_i^f],  \nonumber 
\end{gather}
where $a_{i}$, $b_{i}$, $c_{i}$, $d_{i}$ and $e_{i}$ are constants of integration that can be computed by the initial, final, and transversality conditions \cite{Malikopoulos2019ACC}.

\section{Upper-level optimization} \label{sec:4}

When a vehicle $i\in\mathcal{N}(t),$ with a given $o_i$, enters the control zone, it accesses the intersection's \textit{crossing protocol} and solves an upper-level optimization problem. The solution of this problem yields for $i$ the path trajectory $t_{p_i,l_i}\big(p_i(t),l_i(t)\big)$ and the minimum time $t_{i}^{f}$ to exit the control zone. 
In our exposition, we seek to derive the minimum $t_{i}^{f}$ without activating any of the state and control constraints of the low-level optimization Problem \ref{problem1}. Therefore, the upper-level optimization problem  should yield a $t_{i}^{f}$ such that the solution of the low-level optimization problem will result in the unconstrained case \eqref{eq:20} - \eqref{eq:22}. 

There is an apparent trade off between the two problems. The lower the value of $t_{i}^{f}$ in the upper-level problem, the higher the value of the control input in $[t_{i}^{0}, t_{i}^{f}]$ in the low-level problem. 
The low-level problem is directly related to minimizing energy for each vehicle (individually optimal solution). On the other hand, the upper-level problem is related to maximizing the throughput of the intersection, thus eliminating stop-and-go driving (social optimal solution). Therefore, by seeking a solution for the upper-level problem which guarantees  that none of the state and control constraints become active may be considered an appropriate compromise between the two. 

For simplicity of notation, for each vehicle $i\in\mathcal{N}(t)$ we write the optimal position \eqref{eq:22} of the unconstrained case in the following form

\begin{gather}
p^{*}_{i}(t) = \phi_{i,3} \cdot t^3 +\phi_{i,2} \cdot t^2 + \phi_{i,1} \cdot t +\phi_{i,0} , ~ t\in [t_{i}^{0}, t_{i}^{f}], \label{eq:upper_p}%
\end{gather}
where $\phi_{i,3}, \phi_{i,2}, \phi_{i,1}, \phi_{i,0}\in\mathbb{R}$ are the constants of integration derived in the Hamiltonian analysis, in Section \ref{sec:3}, for the unconstrained case.

\begin{remark} \label{rem:3}
	For each $i\in\mathcal{N}(t),$ the optimal position \eqref{eq:upper_p} is a continuous and differentiable function. Based on \eqref{speed}, it is also an increasing function with respect to $t\in\mathbb{R}^+$.
\end{remark}

%

Next, we investigate some properties of \eqref{eq:upper_p}.

\begin{lemma} \label{lem:1}
For each $i\in\mathcal{N}(t)$, the optimal position $p_i^*$ given by \eqref{eq:upper_p}  is an one-one function.
\end{lemma}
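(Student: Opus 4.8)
The plan is to prove injectivity by establishing strict monotonicity, since any strictly increasing function on an interval is automatically one-to-one. The single resource that does all the work is the \emph{strict} lower speed bound in \eqref{speed}, namely $v_{min}>0$, which will give a strictly positive derivative for $p_i^*$ and hence strict monotonicity.

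First I would differentiate \eqref{eq:upper_p} with respect to $t$. By the vehicle dynamics \eqref{eq:model2} the time derivative of the position is exactly the speed, so
\[
\frac{d}{dt}\,p_i^*(t) = v_i^*(t), \qquad t\in[t_i^0,t_i^f],
\]
which is equivalently obtained by differentiating the cubic in \eqref{eq:upper_p} to recover the quadratic speed profile \eqref{eq:21}. Next I would invoke the constraint \eqref{speed}, which guarantees $v_i^*(t)\ge v_{min}>0$ for every $t\in[t_i^0,t_i^f]$; hence the derivative of $p_i^*$ is strictly positive on the whole interval. I would then conclude with the standard mean-value-theorem argument: for any $t_1,t_2\in[t_i^0,t_i^f]$ with $t_1<t_2$ there is some $c\in(t_1,t_2)$ such that
\[
p_i^*(t_2)-p_i^*(t_1) = v_i^*(c)\,(t_2-t_1) > 0,
\]
so $p_i^*(t_1)\neq p_i^*(t_2)$. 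This is precisely the statement that $p_i^*$ is one-to-one on $[t_i^0,t_i^f]$.

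There is no real obstacle here; the only point that requires care—rather than a genuine difficulty—is that the \emph{strict} inequality $v_{min}>0$ in \eqref{speed} is essential. It rules out any subinterval on which the speed vanishes and therefore delivers \emph{strict} (not merely weak) monotonicity, which is exactly what one-to-oneness demands. Remark~\ref{rem:3} already records the continuity, differentiability, and monotonicity of $p_i^*$; the proof of Lemma~\ref{lem:1} simply upgrades that monotonicity to the strict version via $v_{min}>0$ and reads off injectivity.
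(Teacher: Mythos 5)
Your proof is correct and follows essentially the same route as the paper's: both derive injectivity from strict monotonicity of $p_i^*$, which in turn comes from the strictly positive speed bound $v_{min}>0$ in \eqref{speed}. You simply spell out the mean-value-theorem step that the paper's one-line proof leaves implicit.
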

\begin{proof}
	Since, for each $i\in\mathcal{N}(t),$ $p_i^*(t)$ is an increasing function with respect to $t\in\mathbb{R}^+$ and from \eqref{speed}, for any $t_1, t_2\in[t_{i}^{0}, t_{i}^{f}]$, $p_i^*(t_{1})\neq p_i^*(t_{2}).$
	\end{proof}

\begin{corollary} \label{cor:1}
	Since, for each $i\in\mathcal{N}(t),$  \eqref{eq:upper_p} is an one-one function, there exist an inverse function $p_i^*(t)^{-1}$ such that
\begin{gather} \label{eq:upper_inversep}
p^{*}_{i}(t)^{-1} = \omega_{i,3} \cdot p^3 +\omega_{i,2} \cdot p^2 + \omega_{i,1} \cdot p +\omega_{i,0} , 
\end{gather}	
where $\omega_{i,3}, \omega_{i,2}, \omega_{i,1}, \omega_{i,0}\in\mathbb{R}$ are constants that are a function of $\phi_{i,3}, \phi_{i,2}, \phi_{i,1}, \phi_{i,0}$.
\end{corollary}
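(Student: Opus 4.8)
The plan is to separate the existence of the inverse, which is immediate, from the explicit cubic form asserted in \eqref{eq:upper_inversep}, which is the substantive and delicate part. For existence, I would invoke Lemma \ref{lem:1} together with Remark \ref{rem:3}: on $[t_i^0,t_i^f]$ the map $p_i^*$ is continuous, strictly increasing, and one-one, so it is a bijection onto its image $p_i^*\big([t_i^0,t_i^f]\big)$. A continuous strictly monotone function on a compact interval has a continuous strictly monotone inverse, so $(p_i^*)^{-1}$ exists and is itself continuous and increasing. This part requires nothing beyond the two results already established.

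To produce an explicit formula, I would solve $p=\phi_{i,3}t^3+\phi_{i,2}t^2+\phi_{i,1}t+\phi_{i,0}$ for $t$ as a function of $p$. First I would remove the quadratic term by the shift $t=\tau-\phi_{i,2}/(3\phi_{i,3})$, reducing the relation to a depressed cubic $\tau^3+P\tau+Q=0$ whose coefficients $P,Q$ are affine in $p$ and rational in the $\phi_{i,k}$. Applying Cardano's formula then yields the unique real root $\tau(p)$ on the operating range, with uniqueness guaranteed by the monotonicity above, and undoing the shift gives $t=(p_i^*)^{-1}(p)$ in closed form, with the $\omega_{i,k}$ to be read off as functions of the $\phi_{i,k}$.

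The main obstacle is honesty about the \emph{form} \eqref{eq:upper_inversep}: the functional inverse of a generic cubic is not a cubic polynomial in $p$, since the Cardano root involves cube roots of expressions that are affine in $p$. Thus an exact identification of constants $\omega_{i,3},\ldots,\omega_{i,0}$ making \eqref{eq:upper_inversep} hold identically is not available in general, and I would treat this as the crux of the argument. I expect to resolve it by clarifying the intended sense of \eqref{eq:upper_inversep} — for instance as a low-order polynomial representation valid on the bounded position range $\mathcal{P}_i$, which is well justified here because $\phi_{i,3}=\tfrac{1}{6}(a_i-b_i\xi_i)$ is typically small at low intersection speeds, so the cubic term is a perturbation and the inverse is accurately captured by a cubic in $p$ over the relevant interval. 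Making that statement precise, and bounding its error, is where the real work lies.
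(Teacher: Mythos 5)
Your existence argument coincides with everything the paper actually offers: the corollary is stated with no proof at all, and is presented as an immediate consequence of Lemma \ref{lem:1} (one-one, hence invertible), exactly as in your first paragraph. Where you go beyond the paper is in your third paragraph, and your skepticism there is warranted rather than a defect of your attempt. The exact inverse of a nondegenerate cubic $p=\phi_{i,3}t^3+\phi_{i,2}t^2+\phi_{i,1}t+\phi_{i,0}$ cannot be a cubic polynomial in $p$: composing $t=\omega_{i,3}p^3+\dots+\omega_{i,0}$ with the forward map would have to give the identity, but the composition has a $t^9$ term with coefficient $\omega_{i,3}\phi_{i,3}^3$, forcing $\omega_{i,3}=0$ or $\phi_{i,3}=0$, and cascading down to the conclusion that both maps must be affine. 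The Cardano route you sketch confirms the same thing: the true inverse involves cube roots of expressions affine in $p$. The paper never addresses this; it simply asserts the form \eqref{eq:upper_inversep} and then builds Lemma \ref{lem:2} and Problem \ref{problem2} on top of the $\omega_{i,k}$ as if they were exact. So the honest summary is that you have proved what is provable (existence, continuity, monotonicity of the inverse), correctly identified that the stated polynomial form cannot hold exactly, and proposed the only viable repair --- reading \eqref{eq:upper_inversep} as a low-order approximation on the bounded range $\mathcal{P}_i$ with a quantified error. That repair is not in the paper, and if you pursue it you should note that the downstream use of the $\omega_{i,k}$ as optimization variables in Problem \ref{problem2} inherits whatever approximation error you establish.
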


\begin{remark} \label{rem:4}
	For each $i\in\mathcal{N}(t),$  $t\in [t_{i}^{0}, t_{i}^{f}]$,  we rewrite \eqref{eq:upper_p} as follows
	\begin{gather}
	p^{*}_{i}(t) = \phi_{i,3} \cdot t_i^3 +\phi_{i,2} \cdot t_i^2 + \phi_{i,1} \cdot t_i +\phi_{i,0}. \label{eq:upper_pi}%
	\end{gather}
\end{remark}

\begin{lemma} \label{lem:2}
	Let $p^{*}_{i}(t)^{-1}$ be the inverse function of \eqref{eq:upper_p} for each vehicle $i\in\mathcal{N}(t).$ Then the constants $\phi_{i,3}, \phi_{i,2}, \phi_{i,1}, \phi_{i,0}\in\mathbb{R}$ can be derived by $\omega_{i,3}, \omega_{i,2}, \omega_{i,1}, \omega_{i,0}\in\mathbb{R}.$ 
\end{lemma}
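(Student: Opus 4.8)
The plan is to exploit the fact that function inversion is involutive, together with Corollary~\ref{cor:1}, so that no direct algebraic inversion of the nonlinear coefficient relations is required. Corollary~\ref{cor:1} already asserts that for a one-one cubic of the form \eqref{eq:upper_p}, the inverse is again a cubic \eqref{eq:upper_inversep} whose coefficients $\omega_{i,3},\dots,\omega_{i,0}$ are determined by $\phi_{i,3},\dots,\phi_{i,0}$. I read Lemma~\ref{lem:2} as the reverse direction, namely that the coefficient map $(\phi_{i,3},\phi_{i,2},\phi_{i,1},\phi_{i,0})\mapsto(\omega_{i,3},\omega_{i,2},\omega_{i,1},\omega_{i,0})$ is invertible, and the key observation is that $\bigl(p_i^*(t)^{-1}\bigr)^{-1}=p_i^*(t)$.

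First I would verify that $p_i^*(t)^{-1}$ satisfies the same hypotheses that allowed Corollary~\ref{cor:1} to be applied to $p_i^*$. By Remark~\ref{rem:3}, $p_i^*$ is continuous, differentiable, and strictly increasing on $[t_i^0,t_i^f]$; consequently its inverse is continuous and strictly increasing on the image interval, and hence one-one by the argument of Lemma~\ref{lem:1}. Thus $p_i^*(t)^{-1}$, which by \eqref{eq:upper_inversep} is itself a cubic of the same structural form, qualifies as an admissible position function to which the construction of Corollary~\ref{cor:1} applies.

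Next I would apply Corollary~\ref{cor:1} to $p_i^*(t)^{-1}$ in place of $p_i^*$. Since inversion is involutive, the inverse of $p_i^*(t)^{-1}$ is $p_i^*$ itself. Corollary~\ref{cor:1} then guarantees that the coefficients of this inverse, which are precisely $\phi_{i,3},\phi_{i,2},\phi_{i,1},\phi_{i,0}$, are expressed as functions of the coefficients $\omega_{i,3},\omega_{i,2},\omega_{i,1},\omega_{i,0}$ of $p_i^*(t)^{-1}$. This is exactly the assertion of the lemma, obtained without ever solving the system from Corollary~\ref{cor:1} explicitly.

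The step I expect to be the main obstacle is confirming that the construction in Corollary~\ref{cor:1} is genuinely symmetric in its hypotheses, that is, that the passage from a cubic to its inverse coefficients relied only on the function being a strictly increasing one-one cubic on a closed interval and not on some extra structure peculiar to $p_i^*$ (for instance the transversality conditions that fixed $\phi_{i,0}$ in Section~\ref{sec:3}). Should that derivation have used such structure, the clean involution argument fails, and I would fall back on a direct inversion: regard the relations of Corollary~\ref{cor:1} as a smooth map $F:\mathbb{R}^4\to\mathbb{R}^4$ and invoke the inverse function theorem, verifying that its Jacobian is nonsingular at the relevant point so that the $\phi_{i,k}$ are recovered, at least locally, from the $\omega_{i,k}$.
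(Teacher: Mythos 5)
Your argument is sound, but there is nothing in the paper to compare it against: the paper's ``proof'' of Lemma~\ref{lem:2} consists of the single sentence that the proof is omitted for space and the result is trivial. Taking the paper's Corollary~\ref{cor:1} as given, your involution route works, and in fact it can be streamlined: you do not need to re-apply the \emph{construction} of Corollary~\ref{cor:1} to $p_i^*(t)^{-1}$ (the step you flag as the main obstacle). Uniqueness of the inverse function already does the job --- given the $\omega_{i,k}$, the cubic \eqref{eq:upper_inversep} is determined, hence so is its inverse $\bigl(p_i^*(t)^{-1}\bigr)^{-1}=p_i^*(t)$, which is known a priori to be the cubic \eqref{eq:upper_p}; since a polynomial agreeing with $p_i^*$ on a nondegenerate interval has the same coefficients, the $\phi_{i,k}$ are determined by the $\omega_{i,k}$. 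This also disposes of the worry about whether Corollary~\ref{cor:1} is ``symmetric in its hypotheses,'' so your inverse-function-theorem fallback (which would only give local invertibility anyway) is unnecessary. The one caveat worth recording is that your proof, like the lemma itself, inherits the weakness of Corollary~\ref{cor:1}: the inverse of a genuine cubic polynomial ($\phi_{i,3}\neq 0$) is not a polynomial, so the exact cubic form \eqref{eq:upper_inversep} on which both the corollary and the lemma rest holds only in degenerate cases or as an approximation; that is a flaw in the paper's setup rather than in your argument.
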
 
\begin{proof}
	Due to space limitation the proof is omitted. However, the result is trivial. 
\end{proof}

\begin{remark} \label{rem:5}
	The inverse function $p_i^*(t)^{-1}=t_i(p^*(t)),$ where $t_i(p^*(t))\in[t_{i}^{0}, t_{i}^{f}]$, yields the time that vehicle $i\in\mathcal{N}(t)$ is at the position $p^{*}_{i}(t)$ inside the control zone. 
\end{remark}

\begin{lemma} \label{lem3}
	For each $i\in\mathcal{N}(t),$ the domain of $t_i(p^*(t))$ is the closed interval $[p_i(t_{i}^{0}),p_i(t_{i}^{f})]$.
\end{lemma}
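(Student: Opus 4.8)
The plan is to exploit the observation that the domain of the inverse function $t_i(p^*(t)) = p_i^*(t)^{-1}$ coincides with the range (image) of $p_i^*$ restricted to the time interval $[t_i^0, t_i^f]$. Indeed, by Corollary \ref{cor:1} the inverse is well defined precisely on the set of positions actually attained by the vehicle, so it suffices to show that the image of the closed interval $[t_i^0, t_i^f]$ under the map $p_i^*$ is exactly the closed interval $[p_i(t_i^0), p_i(t_i^f)]$.

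First, I would invoke Remark \ref{rem:3}, which establishes that $p_i^*$ is continuous and strictly increasing on $\mathbb{R}^+$; the latter holds because $\dot{p}_i^*(t) = v_i^*(t) \geq v_{min} > 0$ by \eqref{speed}. Strict monotonicity, together with Lemma \ref{lem:1}, already guarantees injectivity, so the only remaining task is to pin down the image. I would then apply the standard real-analysis fact that a continuous, strictly increasing function on a compact interval $[a,b]$ maps it onto $[f(a), f(b)]$: by the intermediate value theorem every value between $p_i^*(t_i^0)$ and $p_i^*(t_i^f)$ is attained, and by monotonicity no value outside this range is attained. Since the endpoints $p_i^*(t_i^0)=p_i(t_i^0)$ and $p_i^*(t_i^f)=p_i(t_i^f)$ are themselves attained at $t_i^0$ and $t_i^f$, the image is the full closed interval $[p_i(t_i^0), p_i(t_i^f)]$, which is precisely the claimed domain of $t_i(p^*(t))$.

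The argument is essentially routine, so there is no serious obstacle; the only point requiring a little care is confirming that the interval is closed rather than open. This follows immediately from the fact that $p_i^*$ is defined and continuous on the compact domain $[t_i^0, t_i^f]$, so the endpoint values are genuinely attained and hence belong to the image, giving a closed interval as stated.
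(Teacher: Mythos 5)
Your proof is correct and follows essentially the same route as the paper's: both identify the domain of the inverse with the image of $p_i^*$ on $[t_i^0, t_i^f]$ and use continuity, monotonicity, and the Intermediate Value Theorem to conclude the image is the closed interval $[p_i(t_i^0), p_i(t_i^f)]$. Your version merely spells out the monotonicity and endpoint-attainment details that the paper leaves implicit.
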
 

\begin{proof}
	Since, for each $i\in\mathcal{N}(t),$ $p_i^*(t)$ is an increasing function in $[t_{i}^{0}, t_{i}^{f}]$, then by the Intermediate Value Theorem, $p_i^*(t)$ takes values on the closed interval $[p_i(t_{i}^{0}),p_i(t_{i}^{f})]$. 
\end{proof}

\begin{corollary} \label{cor:2}
	Since $p_i^*(t)$ is a continuous and one-one function in $[t_{i}^{0}, t_{i}^{f}]$ for each $i\in\mathcal{N}(t),$ then $t_i(p^*(t))$ is also continuous.
\end{corollary}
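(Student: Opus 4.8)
The plan is to show that the inverse map $t_i(\cdot)$ inherits continuity directly from the structural properties of $p_i^*$ already established, so no new analytic machinery is required. First I would collect the three facts that are now available for each $i\in\mathcal{N}(t)$: by Remark \ref{rem:3} together with Lemma \ref{lem:1}, the map $p_i^*:[t_i^0,t_i^f]\to\mathbb{R}$ is continuous and strictly increasing, hence injective; and by Lemma \ref{lem3}, its image is exactly the closed interval $[p_i(t_i^0),p_i(t_i^f)]$. Putting these together, $p_i^*$ is a continuous bijection from the compact interval $[t_i^0,t_i^f]$ onto the interval $[p_i(t_i^0),p_i(t_i^f)]$, and $t_i(\cdot)$ is precisely its inverse.

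For the main line of argument I would use a compactness (homeomorphism) argument: a continuous bijection from a compact space onto a Hausdorff space is a homeomorphism. Since the domain $[t_i^0,t_i^f]$ is compact and the codomain $[p_i(t_i^0),p_i(t_i^f)]\subset\mathbb{R}$ is Hausdorff, the map $p_i^*$ sends closed sets to closed sets; equivalently, the preimages under $t_i(\cdot)$ of closed sets are closed, so $t_i(\cdot)$ is continuous on its domain $[p_i(t_i^0),p_i(t_i^f)]$. As an independent and more elementary route I would note the monotone-inverse theorem: because $p_i^*$ is strictly increasing, its inverse $t_i(\cdot)$ is strictly increasing as well, and a monotone function on an interval is continuous if and only if its range is an interval. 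Here the range of $t_i(\cdot)$ is the full interval $[t_i^0,t_i^f]$, which rules out the only possible failure mode of a monotone map, namely a jump that would leave a gap in the range; hence $t_i(\cdot)$ has no discontinuities.

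I would carry the steps out in this order: (i) state strict monotonicity and continuity of $p_i^*$ by citing Remark \ref{rem:3} and Lemma \ref{lem:1}; (ii) record surjectivity onto $[p_i(t_i^0),p_i(t_i^f)]$ from Lemma \ref{lem3}; (iii) conclude that $p_i^*$ is a continuous bijection between the stated intervals; and (iv) invoke either the compactness argument or the monotone-inverse theorem to obtain continuity of $t_i(\cdot)$. I expect to present the compactness argument as the primary proof and to remark that the monotone-inverse theorem yields the same conclusion.

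The main obstacle is essentially bookkeeping rather than analysis, since the result is classical. The only genuine care needed is to verify that the hypotheses of whichever theorem I invoke are actually met: specifically, that the injectivity from Lemma \ref{lem:1} is upgraded to \emph{strict monotonicity} (so that the inverse is order-preserving) and that Lemma \ref{lem3} supplies surjectivity onto a \emph{full} interval. With both of these in hand, the continuous-bijection-on-a-compact-interval argument applies cleanly and the continuity of $t_i(\cdot)$ follows immediately.
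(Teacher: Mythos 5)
Your proof is correct and follows essentially the same route the paper intends: the paper states Corollary \ref{cor:2} without an explicit proof, treating it as an immediate consequence of the classical fact that a continuous injection on a compact interval has a continuous inverse, which is precisely the compactness/monotone-inverse argument you spell out. Your write-up simply makes explicit the hypotheses (continuity and strict monotonicity from Remark \ref{rem:3} and Lemma \ref{lem:1}, surjectivity onto $[p_i(t_i^0),p_i(t_i^f)]$ from Lemma \ref{lem3}) that the paper leaves implicit.
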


\begin{corollary} \label{cor:3}
	For each $i\in\mathcal{N}(t),$ $p'\big(t_i(p(t)) \big)\neq 0$ for all $p\in [p_i(t_{i}^{0}),p_i(t_{i}^{f})]$. Hence, $t_i(p^*(t))$ is differentiable in $[p_i(t_{i}^{0}),p_i(t_{i}^{f})]$.
\end{corollary}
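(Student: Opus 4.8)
The plan is to reduce this to a standard application of the inverse function theorem, where the only substantive point is verifying that the derivative of $p_i^*$ never vanishes. The crucial observation is that, by the vehicle dynamics \eqref{eq:model2}, the time derivative of the optimal position is exactly the optimal speed, i.e., $\dot p_i^*(t)=v_i^*(t)$ for all $t\in[t_i^0,t_i^f]$; equivalently, the quantity $p'$ appearing in the statement is just the speed $v_i^*$ evaluated along the trajectory.

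First I would invoke the speed constraint \eqref{speed}, which requires $0<v_{min}\le v_i(t)$. Together with $\dot p_i^*(t)=v_i^*(t)$, this gives $\dot p_i^*(t)\ge v_{min}>0$ for every $t\in[t_i^0,t_i^f]$. By Lemma~\ref{lem3}, each position $p\in[p_i(t_i^0),p_i(t_i^f)]$ corresponds to a time $t_i(p)\in[t_i^0,t_i^f]$, so evaluating at $t=t_i(p)$ yields $p'\big(t_i(p)\big)=v_i^*\big(t_i(p)\big)\ge v_{min}>0$. This establishes the nonvanishing claim.

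With the derivative shown to be nonzero, I would then collect the remaining hypotheses of the inverse function theorem: by Remark~\ref{rem:3}, $p_i^*$ is continuous and differentiable on $[t_i^0,t_i^f]$, and by Lemma~\ref{lem:1} it is one-one. The inverse function theorem then guarantees that $t_i(p^*(t))$ is differentiable on the whole domain $[p_i(t_i^0),p_i(t_i^f)]$ identified in Lemma~\ref{lem3}, with
\begin{equation}
\frac{d}{dp}\,t_i(p)=\frac{1}{\dot p_i^*\big(t_i(p)\big)}.
\end{equation}
The continuity already recorded in Corollary~\ref{cor:2} is consistent with, and subsumed by, this conclusion.

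I do not expect a genuine obstacle: the argument is essentially immediate once the identity $\dot p_i^*=v_i^*$ is noticed, since the strict positivity $v_{min}>0$ built into \eqref{speed} is precisely what rules out a vanishing derivative. The only point deserving minor care is the behaviour at the two endpoints of the closed interval, where differentiability must be interpreted one-sidedly; this follows from the one-sided form of the inverse function theorem together with the same uniform bound $v_{min}>0$, and so poses no real difficulty.
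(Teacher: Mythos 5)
Your proof is correct and follows the argument the paper intends: the paper states this as a corollary without an explicit proof, but the underlying reasoning---$\dot p_i^*(t)=v_i^*(t)\ge v_{min}>0$ by \eqref{eq:model2} and \eqref{speed}, combined with continuity and injectivity from Remark~\ref{rem:3}, Lemma~\ref{lem:1}, and Corollary~\ref{cor:2}, then the inverse function theorem---is exactly what you wrote. Your explicit handling of the one-sided differentiability at the endpoints is a small additional care the paper does not mention.
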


\begin{lemma} \label{lem4}
	For each $i\in\mathcal{N}(t),$ $t_i(p^*(t))$ is an increasing function in $[p_i(t_{i}^{0}),p_i(t_{i}^{f})]$.	
\end{lemma}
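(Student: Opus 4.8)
The plan is to deduce monotonicity of the inverse function directly from the inverse function theorem, using the strict positivity of speed as the crucial ingredient. By Corollary \ref{cor:3}, $t_i(p^*(t))$ is already known to be differentiable on $[p_i(t_i^0),p_i(t_i^f)]$, and by Remark \ref{rem:5} it is the inverse of the optimal position $p_i^*$. Since a continuously differentiable function with strictly positive derivative on an interval is increasing there, it suffices to show that $\frac{d}{dp}\,t_i(p^*(t))>0$ throughout the domain.

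First I would recall from the dynamics \eqref{eq:model2} that $\dot{p}_i^*(t)=v_i^*(t)$, so the derivative of the optimal position is exactly the optimal speed. The speed constraint \eqref{speed} then gives $p_i^{*\prime}(t)=v_i^*(t)\ge v_{min}>0$ for every $t\in[t_i^0,t_i^f]$; in particular $p_i^{*\prime}$ never vanishes on the interval. This is the single fact that drives the whole argument, and it is precisely the place to be careful: the result depends on the \emph{strict} lower bound $v_{min}>0$ in \eqref{speed}, not merely on $p_i^*$ being increasing.

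Next, because $p_i^*$ is continuously differentiable with nonvanishing derivative, the inverse function theorem applies and yields
\begin{equation}
\frac{d}{dp}\,t_i(p^*(t)) = \frac{1}{p_i^{*\prime}\big(t_i(p^*(t))\big)} = \frac{1}{v_i^*\big(t_i(p^*(t))\big)}.\nonumber
\end{equation}
Since $v_i^*(\cdot)\ge v_{min}>0$ along the trajectory, the right-hand side is strictly positive for all $p\in[p_i(t_i^0),p_i(t_i^f)]$. Hence $t_i(p^*(t))$ has strictly positive derivative on its entire domain and is therefore increasing, which is the claim.

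I would note that no genuinely hard step arises here; the lemma is essentially a packaging of earlier results. If one prefers to avoid the derivative entirely, the same conclusion follows from an order-preserving argument: for $p_1<p_2$ in the domain, Lemma \ref{lem:1} supplies unique preimages $t_1=t_i(p_1)$ and $t_2=t_i(p_2)$, and if $t_1\ge t_2$ held then the monotonicity of $p_i^*$ (Remark \ref{rem:3}) would force $p_1\ge p_2$, a contradiction. Either route closes the proof, with the positivity of $v_{min}$ being the only substantive input to guard.
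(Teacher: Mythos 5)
Your proposal is correct, but it does not follow the paper's route. The paper proves the lemma purely topologically: it takes the continuity of $t_i(p^*(t))$ from Corollary \ref{cor:2} and its injectivity, and runs the classical ``a continuous injection on an interval is monotone'' argument, applying the Intermediate Value Theorem to $t_i$ itself on a subinterval $[\alpha_2, p_i(t_i^f)]$ to manufacture a repeated value and contradict one-to-one-ness. Your primary argument instead goes through Corollary \ref{cor:3} and the inverse function theorem, computing $\frac{d}{dp}t_i = 1/v_i^*\big(t_i(p)\big) \ge 1/v_{max} > 0$ and concluding monotonicity from positivity of the derivative; this is heavier machinery but buys a quantitative statement (the slope of the path trajectory is bounded between $1/v_{max}$ and $1/v_{min}$), which is arguably more useful downstream in Problem \ref{problem2} than bare monotonicity. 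Your fallback order-preserving argument --- $p_1 < p_2$ with $t_1 \ge t_2$ would force $p_1 = p_i^*(t_1) \ge p_i^*(t_2) = p_2$ by Remark \ref{rem:3} --- is in fact the cleanest of the three: it needs neither continuity nor differentiability of the inverse, only that $p_i^*$ is increasing and bijective onto its range, and it avoids the paper's somewhat awkward case setup. One minor point: in your derivative route you invoke continuous differentiability, but plain differentiability with positive derivative already suffices via the mean value theorem; and note that strictness of $v_{min}>0$ is essential only for the inverse-function-theorem route, whereas the order argument and the paper's IVT argument survive on strict monotonicity of $p_i^*$ alone.
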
 

\begin{proof}
	From Lemma \ref{lem3}, for each $i\in\mathcal{N}(t)$ the domain of $t_i(p^*(t))$ is $[p_i(t_{i}^{0}), p_i(t_{i}^{f})]$. Let $p_i(t_{i}^{0})<\alpha_1<\alpha_2< p_i(t_{i}^{f})$ with  $t_i(p_i(t_{i}^{0}) < t_i(\alpha_1)$. If we had $t_i(p_i(t_{i}^{0})) > t_i(\alpha_2),$ then by applying the Intermediate Value Theorem to the interval  $[\alpha_2, p_i(t_{i}^{f})]$ would give an $\alpha_3$ with $\alpha_2<\alpha_3< p_i(t_{i}^{f})$ and $t_i(p_i(t_{i}^{0})) = t_i(\alpha_3)$ contradicting the fact that $t_i(p^*(t))$ is one-one on $[p_i(t_{i}^{0}),p_i(t_{i}^{f})]$.
\end{proof}

Since each vehicle $i\in\mathcal{N}(t)$ can change lanes inside the control zone, its position should be associated with the function $l_i(t)$ (Definition \ref{def:lanesfunction}) that yields the lane vehicle $i$ occupies inside the control zone at $t$.

\begin{definition}	\label{def:pos_lane}
	The position of each vehicle $i\in\mathcal{N}(t)$ using lane $l_i(t)$, $m\in\mathcal{L},$ is denoted by $p_{i,l}(t,l)$.
\end{definition}

Based on Definition \ref{def:pos_lane}, we  augment the optimal position of $i\in\mathcal{N}(t)$  given by \eqref {eq:upper_p} to capture the lane that vehicle $i$ as follows
\begin{gather}
p^*_{i}(t,l) = p^*(t)\cdot {I}_{1}(l) +  p^*(t)\cdot {I}_{2}(l) + \dots + p^*(t)\cdot {I}_{M}(l), \nonumber \\
t\in [t_{i}^{0}, t_{i}^{f}], \label{eq:upper_pl}%
\end{gather}
where  ${I}_{m}(l)$, $m\in\mathcal{L}$,  is the indicator function with ${I}_{m}(l=m)=1$, if $i$ occupies lane $m\in\mathcal{L}$ and ${I}_{m}(l\neq m)=0$ otherwise. 
For each vehicle $i\in\mathcal{N}(t)$, the inverse function of \eqref {eq:upper_p} enhanced with the lane that vehicle $i$ occupies is the path trajectory (Definition \ref{def:path})  and can be written as follows
\begin{gather} 
t_{p_i,l_i}(p_i(t),l_i(t))) = \omega_{i,3} \cdot p^3_{i}(t,l) +\omega_{i,2} \cdot p^2_{i}(t,l) \nonumber \\
+\omega_{i,1} \cdot p_{i}(t,l) +\omega_{i,0} . \label{eq:path_traj}
\end{gather}	

The path trajectory $t_{p_i,l_i}(p_i(t),l_i(t))) $ yields the time that vehicle $i$ is at the position $p_i(t)$ inside the control zone and occupies lane $l_i(t)$ and is used as the cost function for the upper-level optimization problem. 

In the upper-level optimization problem, each vehicle $i\in\mathcal{N}(t)$ derives its optimal path trajectory which yields the minimum time $t_i^f$ that vehicle $i$ exits the control zone along with the lane $l^*\in\mathcal{L}$ that should occupy at each $p_i^*$. To formulate this problem, we need to minimize \eqref{eq:path_traj}, evaluated at $p_i(t_i^f),$ with respect to $\omega_{i,3}, \omega_{i,2}, \omega_{i,1}, \omega_{i,0}$ that determine the shape of the path trajectory of the vehicle in $[p_i^0, p_i^f]$. Note that the value of $p_{i}(t_{i}^{f})$ for each $i\in\mathcal{N}(t)$ depends on $o_i$ and, based on the Assumption \ref{ass:lane}, it can be equal to (see Fig. \ref{fig:1}): (1) $p_{i}(t_{i}^{f})=2 S_c + S_m$, if the vehicle crosses the merging zone, (2) $p_{i}(t_{i}^{f})=2 S_c + \frac{\pi R_r}{2}$, if the vehicle makes a right turn at the merging zone, and (3) $p_{i}(t_{i}^{f})=2 S_c + \frac{\pi R_l}{2}$, if the vehicle makes a left turn at the merging zone. For simplicity of notation, we denote the total distance travelled by the vehicle $i\in\mathcal{N}(t)$ in $[t_{i}^{0}, t_{i}^{f}]$ with $S_{i,total}$, thus $p_{i}(t_{i}^{f})=S_{i,total}$.
Hence, the upper-level optimization problem is formulated as follows.

\begin{problem} \label{problem2}

	\begin{gather}\label{eq:decentral2}
	\min_{\omega_{i,3}, \omega_{i,2}, \omega_{i,1}, \omega_{i,0}}t_{p_i,l_i}\big(S_{i,total},l_i(t)\big)\\
	\text{subject to}%
	: \eqref{speed_accel constraints},\eqref{speed}, \eqref{eq:rearend}, \text{and given }t_{i}^{0}\text{, }v_{i}^{0}\text{, }t_{i}^{f}\text{,
	}p_{i}(t_{i}^{0})\text{, }p_{i}(t_{i}^{f}).\nonumber
	\end{gather}
\end{problem}
From Lemma \ref{lem:2}, the constants $\phi_{i,3}, \phi_{i,2}, \phi_{i,1}, \phi_{i,0}\in\mathbb{R}$ corresponding to the constraints imposed through \eqref{eq:upper_p} can be derived by $\omega_{i,3}, \omega_{i,2}, \omega_{i,1}, \omega_{i,0}\in\mathbb{R}$. This is a nonlinear programming problem  that each vehicle can solve using Lagrange multiplier theory.

\section{Simulation Results} \label{sec:5}
\subsection{Validation of Upper-Level Optimization} \label{sec:5a}

To evaluate the effectiveness of the solution of the proposed upper-level optimization problem, we conduct a simulation in MATLAB. The simulation setting is as follows. The intersection contains two roads, each of which has one lane per direction. The length of each direction is 300 $m$, the merging zone of the intersection is 25 $m$ by 25 $m$, and the entry of merging zone is located at 125 $m$ from the entry point for both directions. The maximum and minimum speed are 18 $m/s$ and 2 $m/s$, respectively. The maximum and minimum acceleration are 3.0 $m/s^2$ and -3.0 $m/s^2$. The safety (minimum allowed) headway is 1.0 $s$, and the standstill distance is 1.5 $m$. Six vehicles are entering into the intersection from three directions at different time steps.

Since vehicle 1 is the first vehicle in the network, it cruises through the intersection without any constraints imposed. The trajectories of all  vehicles along with the safety distance are shown in Fig. ~\ref{fig:distance}. Negative values of the safety distance means violation of the rear-end constraint. We see  from Fig.~\ref{fig:distance} that both rear-end and lateral collision constraints are satisfied. The control input (acceleration) and speed profiles for the vehicles in the network is shown in Fig.~\ref{fig:speed}. We note that for all  vehicles driving through the intersection, none of the acceleration and speed constraints are activated.

\begin{figure}[!h]
    \centering
    \includegraphics[width=.48\textwidth]{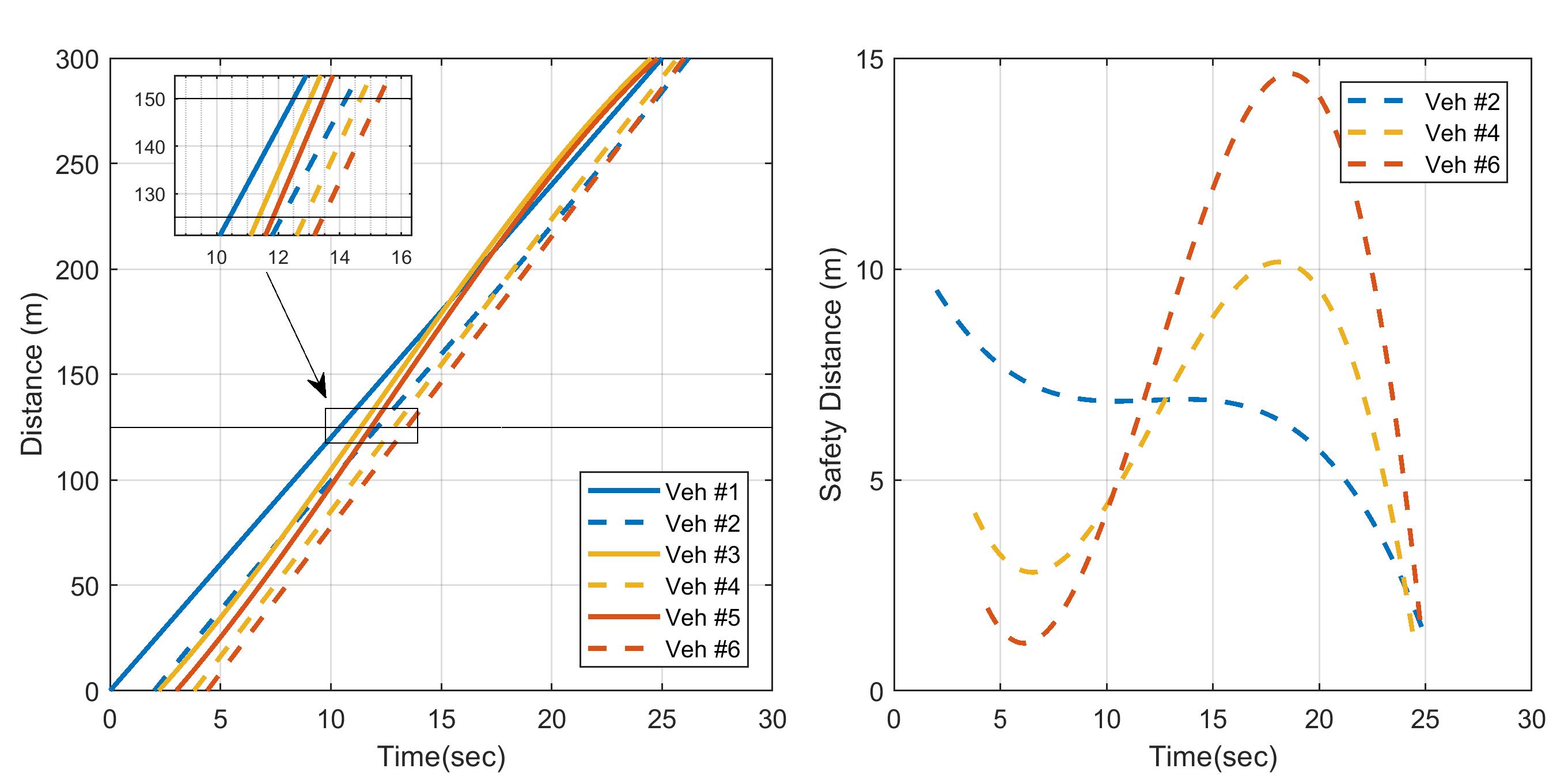}
    \caption{Trajectories and safety distances of vehicles.}
    \label{fig:distance}
\end{figure}

\begin{figure}[!h]
    \centering
    \includegraphics[width=.48\textwidth]{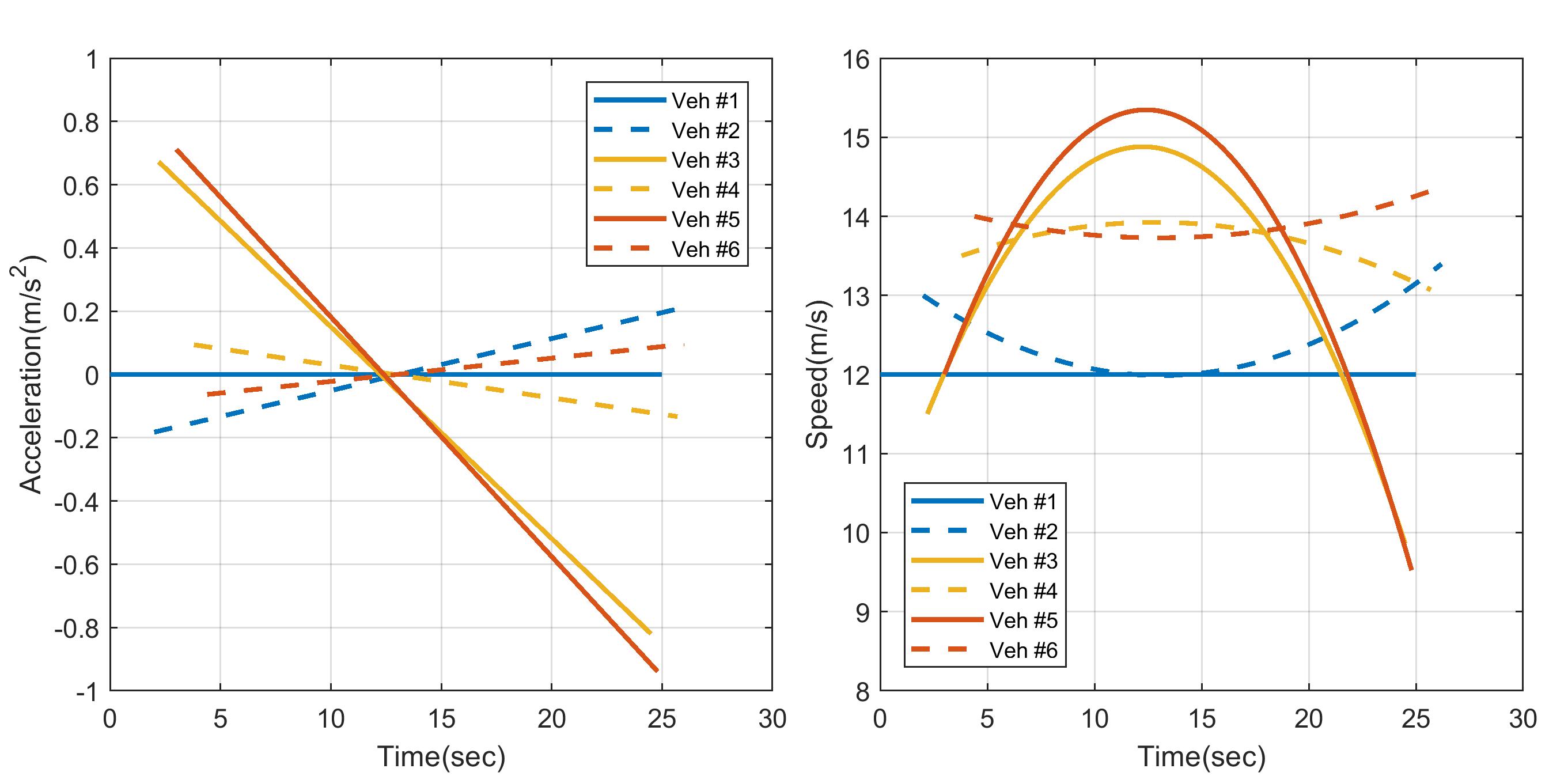}
    \caption{Speed and control profiles of vehicles.}
    \label{fig:speed}
\end{figure}
\section{Concluding Remarks}
In this paper, we formulated an upper-level optimization problem, the solution of which yields, for each CAV, the optimal sequence and lane to cross the intersection. The effectiveness of the solution was illustrated through simulation. We showed, through numerical results, that vehicles are successfully crossing an intersection without any rear-end or lateral collision. In addition, the state and control  constraints did not become active for the entire trajectory for each vehicle. 
While the potential benefits of full penetration of CAVs to alleviate traffic congestion and reduce energy have become apparent, different penetrations of CAVs can alter significantly the efficiency of the entire system. Therefore, future research should look into this direction.


\bibliographystyle{IEEEtran}
\bibliography{TCST_references}

\begin{thebibliography}{10}
\providecommand{\url}[1]{#1}
\csname url@rmstyle\endcsname
\providecommand{\newblock}{\relax}
\providecommand{\bibinfo}[2]{#2}
\providecommand\BIBentrySTDinterwordspacing{\spaceskip=0pt\relax}
\providecommand\BIBentryALTinterwordstretchfactor{4}
\providecommand\BIBentryALTinterwordspacing{\spaceskip=\fontdimen2\font plus
\BIBentryALTinterwordstretchfactor\fontdimen3\font minus
  \fontdimen4\font\relax}
\providecommand\BIBforeignlanguage[2]{{%
\expandafter\ifx\csname l@#1\endcsname\relax
\typeout{** WARNING: IEEEtran.bst: No hyphenation pattern has been}%
\typeout{** loaded for the language `#1'. Using the pattern for}%
\typeout{** the default language instead.}%
\else
\language=\csname l@#1\endcsname
\fi
#2}}

\bibitem{Malikopoulos2013}
A.~A. Malikopoulos and J.~P. Aguilar, ``{An Optimization Framework for Driver
  Feedback Systems},'' \emph{IEEE Transactions on Intelligent Transportation
  Systems}, vol.~14, no.~2, pp. 955--964, 2013.

\bibitem{Margiotta2011}
R.~Margiotta and D.~Snyder, ``{An agency guide on how to establish localized
  congestion mitigation programs},'' U.S. Department of Transportation. Federal
  Highway Administration, Tech. Rep., 2011.

\bibitem{Zhao2019}
L.~Zhao and A.~A. Malikopoulos, ``Enhanced mobility with connectivity and
  automation: A review of shared autonomous vehicle systems,'' \emph{IEEE
  Intelligent Transportation Systems Magazine}, 2020.

\bibitem{Athans1969}
M.~Athans, ``{A unified approach to the vehicle-merging problem},''
  \emph{Transportation Research}, vol.~3, no.~1, pp. 123--133, 1969.

\bibitem{Varaiya1993}
P.~Varaiya, ``Smart cars on smart roads: problems of control,'' \emph{IEEE
  Transactions on Automatic Control}, vol.~38, no.~2, pp. 195--207, 1993.

\bibitem{Dresner2004}
K.~Dresner and P.~Stone, ``{Multiagent traffic management: a reservation-based
  intersection control mechanism},'' in \emph{Proceedings of the Third
  International Joint Conference on Autonomous Agents and Multiagents Systems},
  2004, pp. 530--537.

\bibitem{Dresner2008}
------, ``{A Multiagent Approach to Autonomous Intersection Management},''
  \emph{Journal of Artificial Intelligence Research}, vol.~31, pp. 591--653,
  2008.

\bibitem{DeLaFortelle2010}
A.~{de La Fortelle}, ``{Analysis of reservation algorithms for cooperative
  planning at intersections},'' \emph{13th International IEEE Conference on
  Intelligent Transportation Systems}, pp. 445--449, Sept. 2010.

\bibitem{Yan2009}
F.~Yan, M.~Dridi, and A.~{El Moudni}, ``{Autonomous vehicle sequencing
  algorithm at isolated intersections},'' \emph{2009 12th International IEEE
  Conference on Intelligent Transportation Systems}, pp. 1--6, 2009.

\bibitem{Lee2012}
J.~Lee and B.~Park, ``{Development and Evaluation of a Cooperative Vehicle
  Intersection Control Algorithm Under the Connected Vehicles Environment},''
  \emph{IEEE Transactions on Intelligent Transportation Systems}, vol.~13,
  no.~1, pp. 81--90, 2012.

\bibitem{Kim2014}
K.-D. Kim and P.~Kumar, ``{An MPC-Based Approach to Provable System-Wide Safety
  and Liveness of Autonomous Ground Traffic},'' \emph{IEEE Transactions on
  Automatic Control}, vol.~59, no.~12, pp. 3341--3356, 2014.

\bibitem{Malikopoulos2016a}
J.~Rios-Torres and A.~A. Malikopoulos, ``{A Survey on Coordination of Connected
  and Automated Vehicles at Intersections and Merging at Highway On-Ramps},''
  \emph{IEEE Transactions on Intelligent Transportation Systems}, vol.~18,
  no.~5, pp. 1066--1077, 2017.

\bibitem{Rios-Torres2015}
J.~Rios-Torres, A.~A. Malikopoulos, and P.~Pisu, ``{Online Optimal Control of
  Connected Vehicles for Efficient Traffic Flow at Merging Roads},'' in
  \emph{2015 IEEE 18th International Conference on Intelligent Transportation
  Systems}, 2015, pp. 2432--2437.

\bibitem{Rios-Torres2}
J.~Rios-Torres and A.~A. Malikopoulos, ``{Automated and Cooperative Vehicle
  Merging at Highway On-Ramps},'' \emph{IEEE Transactions on Intelligent
  Transportation Systems}, vol.~18, no.~4, pp. 780--789, 2017.

\bibitem{Ntousakis:2016aa}
I.~A. Ntousakis, I.~K. Nikolos, and M.~Papageorgiou, ``Optimal vehicle
  trajectory planning in the context of cooperative merging on highways,''
  \emph{Transportation Research Part C: Emerging Technologies}, vol.~71, pp.
  464--488, 2016.

\bibitem{Zhang2016a}
Y.~Zhang, A.~A. Malikopoulos, and C.~G. Cassandras, ``Optimal control and
  coordination of connected and automated vehicles at urban traffic
  intersections,'' in \emph{Proceedings of the American Control Conference},
  2016, pp. 6227--6232.

\bibitem{Malikopoulos2018a}
L.~Zhao, A.~A. Malikopoulos, and J.~Rios-Torres, ``Optimal control of connected
  and automated vehicles at roundabouts: An investigation in a mixed-traffic
  environment,'' in \emph{15th IFAC Symposium on Control in Transportation
  Systems}, 2018, pp. 73--78.

\bibitem{Malikopoulos2018b}
A.~Stager, L.~Bhan, A.~A. Malikopoulos, and L.~Zhao, ``A scaled smart city for
  experimental validation of connected and automated vehicles,'' in \emph{15th
  IFAC Symposium on Control in Transportation Systems}, 2018, pp. 120--135.

\bibitem{Malikopoulos2017}
A.~A. Malikopoulos, C.~G. Cassandras, and Y.~Zhang, ``A decentralized
  energy-optimal control framework for connected automated vehicles at
  signal-free intersections,'' \emph{Automatica}, vol.~93, pp. 244 -- 256,
  2018.

\bibitem{Malikopoulos2010a}
A.~A. Malikopoulos, P.~Y. Papalambros, and D.~N. Assanis, ``Online
  identification and stochastic control for autonomous internal combustion
  engines,'' \emph{Journal of Dynamic Systems, Measurement, and Control}, vol.
  132, no.~2, pp. 024\,504--024\,504, 2010.

\bibitem{Malikopoulos2008b}
A.~A. Malikopoulos, P.~Papalambros, and D.~Assanis, ``Optimal engine
  calibration for individual driving styles,'' in \emph{SAE Congress}, 2008.

\bibitem{Bryson:1963}
A.~E. Bryson, Jr., W.~F. Denham, and S.~E. Dreyfus, ``Optimal programming
  problems with inequality constraints i: Necessary conditions for extremal
  solutions,'' \emph{American Institute of Aeronautics and Astronautics
  Journal}, vol.~1, no.~11, pp. 2544 -- 2550, 1963.

\bibitem{Malikopoulos2019ACC}
A.~A. Malikopoulos and L.~Zhao, ``A closed-form analytical solution for optimal
  coordination of connected and automated vehicles,'' in \emph{Proceedings of
  the 2019 American Control Conference}, 2019, pp. 3599--3604.

\end{thebibliography}

\end{document}